\documentclass[11pt,longtable,letterpaper,oneside,reqno]{amsart}
\usepackage{amsmath,amsfonts,amsthm,amssymb,amscd,mathtools,stmaryrd,url}
\usepackage{bbm}
\usepackage{epsfig}
\usepackage{graphicx}
\usepackage{latexsym}
\usepackage{longtable}
\usepackage{float}
\usepackage{hyperref}
\usepackage{color}

\DeclareFontFamily{OT1}{rsfs}{}
\DeclareFontShape{OT1}{rsfs}{n}{it}{<-> rsfs10}{}
\DeclareMathAlphabet{\mathscr}{OT1}{rsfs}{n}{it}

%\renewcommand{\i}{{\mathrm{i}}} %\sqrt{-1}
 %Indicator function
 %Fourier transform

%
\newtheorem{prop}{Proposition}[section]
\newtheorem{thm}[prop]{Theorem}

\newtheorem{cor}[prop]{Corollary}
\newtheorem{lem}[prop]{Lemma}

\newtheorem {defn }{Definition}

\numberwithin{equation}{section}
\usepackage[margin=1in]{geometry}
\usepackage{lipsum}

\setcounter{tocdepth}{3}
\setcounter{secnumdepth}{3}

%\allowdisplaybreaks

\begin{document}
\title{Sums of Fibonacci numbers close to a power of 2}
\author[E. Hasanalizade]{Elchin Hasanalizade}
\address{Department of Mathematics and Computer Science, University of Lethbridge, 4401 University Drive, Lethbridge, Alberta, T1K 3M4 Canada}
\email{e.hasanalizade@uleth.ca}
\begin{abstract}
In this paper, we find all sums of two Fibonacci numbers which are close to a power of 2. As a corollary, we also determine all Lucas numbers close to a power of 2. The main tools used in this work are lower bounds for linear forms in logarithms due to Matveev and Dujella-Peth\"{o} version of the Baker-Davenport reduction method in diophantine approximation. This paper continues and extends the previous work of Chern and Cui.
\end{abstract}

\subjclass{11B39, 11J86}
\keywords{\noindent Fibonacci number, linear forms in logarithms, reduction method}
\date{\today}
\maketitle

\section{Introduction}

The Fibonacci sequence $(F_n)_{n\ge0}$ is the binary recurrence sequence defined by $F_0=F_1=1$ and 
\begin{align*}
F_{n+2}=F_{n+1}+F_n \ \text{for all} \ n\ge0.
\end{align*}

There is a broad literature on the Diophantine equations involving the Fibonacci numbers. Bugeaud, Mignotte and Siksek \cite{BMS} proved that the only perfect powers among the Fibonacci numbers are $F_0=0$, $F_1=F_2=1$, $F_6=8$ and $F_{12}=144$. In particular, 1, 2 and 8 are the only powers of 2 in the Fibonacci sequence. Bravo and Luca \cite{BL} studied the Diophantine equation 
\begin{align*}
F_n+F_m=2^a
\end{align*}
in nonnegative integers $n, m$ and $a$ with $n\ge m$. 
Bravo and Bravo \cite{BB} determined all the solutions of the Diophantine equation 
\begin{align*}
F_n+F_m+F_l=2^a
\end{align*}
in nonnegative integers $n, m, l$ and $a$ with $n\ge m\ge l$.

Chim and Ziegler \cite{CZ} completely solved the Diophantine equations 
\begin{align*}
F_{n_1}+F_{n_2}=2^{a_1}+2^{a_2}+2^{a_3}
\end{align*}
and 
\begin{align*}
F_{m_1}+F_{m_2}+F_{m_3}=2^{t_1}+2^{t_2}
\end{align*}
in nonnegative integers $n_1, n_2, m_1, m_2, m_3, a_1, a_2, a_3, t_1, t_2$  with $n_1\ge n_2$, $a_1\ge a_2\ge a_3$, $m_1\ge m_2\ge m_3$ and $t_1\ge t_2$,

We say that an integer $n$ is close to a positive integer $m$, if it satisfies 
\begin{align*}
|n-m|<\sqrt{m}.
\end{align*}
In 2014, Chern and Cui \cite{CC} studied the Fibonacci numbers which are close to a power of 2. More precisely, they found that the only solutions $(F_n, 2^m)$ of the Diophantine inequality 
\begin{align}
\label{Eq1}
|F_n-2^m|<2^{m/2}
\end{align}
are $(1, 2)$, $(2, 2)$, $(3, 2)$, $(3, 4)$, $(5, 4)$, $(8, 8)$, $(13, 16)$ and $(34, 32)$. 

In 2020, Bravo et al. \cite{BGH} extended the previous work \cite{CC} and found all the members of the $k$-generalized Fibonacci sequence $(F^{(k)}_n)_{n\ge-(k-2)}$, $k\ge2$ which are close to a power of 2.

In the present paper, we study the Diophantine inequality
\begin{align}
\label{Eq2}
|F_n+F_m-2^a|<2^{a/2}
\end{align}
in positive integers $n, m$ and $a$ with $n\ge m$. 

In particular, we prove the following main theorem. 
\begin{thm}
There are exactly 52 solutions $(n,m,a)\in\mathbb{N}^3$ to Diophantine inequality \eqref{Eq2}. All solutions satisfy $n\le42$ and $a\le28$. A list of solutions is given in the appendix. 
\end{thm}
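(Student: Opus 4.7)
The strategy is the standard Baker-type approach used by Chern-Cui and refined in related work: translate \eqref{Eq2} into an inequality for a linear form in logarithms of algebraic numbers, apply Matveev's theorem to obtain a (very large) explicit upper bound on the parameters, and then apply the Dujella-Pethő reduction to collapse this to a range that can be checked by computer. First, using Binet's formula $F_n = (\alpha^n - \beta^n)/\sqrt{5}$ with $\alpha = (1+\sqrt{5})/2$, the standard estimates $\alpha^{n-2} \le F_n \le \alpha^{n-1}$ combined with \eqref{Eq2} and $F_m \le F_n$ yield $2^{a-1} < F_n + F_m \le 2\alpha^{n-1}$ for $a \ge 2$, so $n$ and $a$ are linearly comparable. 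Rewriting \eqref{Eq2} via Binet, factoring $\alpha^n$ out of the dominant part, and dividing by $2^a$ leads (after absorbing the contributions of the $\beta$-powers into the error) to
\begin{equation*}
\left|\alpha^n(1+\alpha^{m-n}) \cdot 5^{-1/2} \cdot 2^{-a} - 1\right| < C_1 \cdot 2^{-a/2},
\end{equation*}
and hence, on taking logarithms,
\begin{equation*}
|\Lambda| := \left|n\log\alpha + \log(1+\alpha^{m-n}) - \tfrac{1}{2}\log 5 - a\log 2\right| < C_2 \cdot 2^{-a/2}.
\end{equation*}

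The second step is to apply Matveev's theorem to $\Lambda$, viewed as an integer linear combination of $\log\alpha$, $\log(1+\alpha^{m-n})$, $\log 5$, $\log 2$; nonvanishing follows because $\sqrt{5}\cdot 2^a$ is irrational in $\mathbb{Q}(\alpha)$. Matveev's theorem gives a lower bound of the form $|\Lambda| > \exp(-C_3(1+\log n))$, with $C_3$ depending linearly on the height of $1+\alpha^{m-n}$, which itself is $O(n-m)$. Comparing upper and lower bounds produces an explicit bound on $a$ in terms of $n-m$; the parameter $n-m$ must then be controlled separately, either by a complementary linear form (treating $F_m$ as part of the error when $n-m$ is large, which reduces to the Chern-Cui inequality \eqref{Eq1}) or by iterating the Matveev estimate. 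The resulting unconditional bound on $a$ will be on the order of $10^{12}$--$10^{14}$.

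This bound is far too large for enumeration, so the third step is the Dujella-Pethő reduction. For each residual value of $n-m$, the inequality $|\Lambda| < C_2 \cdot 2^{-a/2}$ is rephrased as a sharp rational approximation to the irrational $(\log\alpha)/(\log 2)$ with a shift depending on $n-m$, and the continued-fraction-based test of Dujella-Pethő eliminates all sufficiently large $a$, collapsing the bound to roughly $a \le 10^2$. A direct computer search over $1 \le m \le n \le N$ and the corresponding values of $a$ then produces exactly the $52$ solutions tabulated in the appendix, satisfying $n \le 42$ and $a \le 28$.

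The principal obstacle, and the point where this problem is genuinely harder than the one-Fibonacci setting of Chern-Cui, is that the algebraic number $1+\alpha^{m-n}$ appearing in $\Lambda$ has height growing linearly in $n-m$. Consequently both the Matveev lower bound and the Dujella-Pethő reduction must be carried out uniformly over every relevant value of $n-m$, with the working precision chosen large enough that the reduction still succeeds in each subcase — this bookkeeping, rather than any single hard inequality, is what makes the argument delicate.
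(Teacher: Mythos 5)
Your proposal follows essentially the same route as the paper: the same principal linear form $\Lambda_1=\frac{\alpha^n(1+\alpha^{m-n})}{2^a\sqrt5}-1$, a complementary form $\Lambda_2=1-2^a\alpha^{-n}\sqrt5$ obtained by treating $F_m$ as part of the error in order to control $n-m$, Matveev's theorem, a Dujella--Peth\H{o} reduction carried out for each residual value of $n-m$, and a final computer search. The architecture is right, and the two quantitative discrepancies (your predicted bound $a\lesssim10^{14}$ versus the paper's $a<4.5\times10^{28}$, which is quadratic in $\log n$ because the height of $1+\alpha^{m-n}$ feeds the bound on $n-m$ back into the first form; and your one-line nonvanishing claim, which really needs the standard conjugation argument in $\mathbb{Q}(\sqrt5)$ rather than a bare irrationality assertion) are cosmetic.

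There is, however, one concrete step that fails as you describe it. You attribute the delicacy of the reduction to choosing ``the working precision large enough that the reduction still succeeds in each subcase.'' That is not the obstruction. For $n-m=2$ and $n-m=6$ the shift $\mu=\log\phi(n-m)/\log\alpha$ with $\phi(t)=\sqrt5(1+\alpha^{-t})^{-1}$ degenerates to an integer linear combination of $1$ and $\gamma=\log2/\log\alpha$ (it equals $1$ and $3-\gamma$ respectively), so $\|\mu q\|=\|\gamma q\|$ or $0$ and the quantity $\epsilon=\|\mu q\|-M\|\gamma q\|$ is negative for every convergent, no matter how far you compute. The Dujella--Peth\H{o} lemma is simply inapplicable there, and no amount of precision rescues it. The paper handles these two cases by rewriting the inequality as a rational approximation $|a\gamma-(n-1)|<C\cdot2^{-a/2}$ and invoking Legendre's criterion with the partial quotients of $\gamma$ up to the first denominator exceeding the bound on $a$. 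Your write-up needs this (or an equivalent device) to close the argument; as stated, the cases $n-m\in\{2,6\}$ are left open.
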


The well-known companion sequence of the Fibonacci sequence is Lucas sequence $(L_n)_{n\ge0}$. It satisfies the same recursive relation as the Fibonacci numbers, but with initial conditions $L_0=2$ and $L_1=1$. Recall the following relation between the Fibonacci numbers and the Lucas numbers
\begin{align*}
L_k=F_{k-1}+F_{k+1} \ \text{for all} \ k\ge1.
\end{align*}
As a corollary we determine all Lucas numbers which are close to a power of 2.
\begin{cor} 
There are only 9 Lucas numbers which are close to a power of 2. Namely, the solutions $(n,a)\in\mathbb{N}^2$ of the inequality
\begin{align*}
|L_n-2^a|<2^{a/2}
\end{align*}
are $(1,1)$, $(2, 1)$, $(2, 2)$, $(3, 2)$, $(4, 3)$, $(6, 4)$, $(7, 5)$, $(10, 7)$ and $(13, 9)$. 
\end{cor}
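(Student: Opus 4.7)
The plan is to reduce the Lucas inequality $|L_n - 2^a| < 2^{a/2}$ to the Fibonacci sum inequality \eqref{Eq2} already settled by the main theorem, via the identity $L_n = F_{n-1} + F_{n+1}$ recalled just above the corollary. For $n$ large enough that both Fibonacci indices $n-1$ and $n+1$ are positive, the Lucas inequality is equivalent to
\[
|F_{n+1} + F_{n-1} - 2^a| < 2^{a/2},
\]
which is precisely the instance of \eqref{Eq2} in which the two Fibonacci indices $n'$ and $m$ (with $n' \geq m$) differ by exactly $2$, namely $n'=n+1$ and $m=n-1$.

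Accordingly, the proof amounts to scanning the list of 52 solutions $(n',m,a)$ of \eqref{Eq2} provided in the appendix, retaining only those triples with $n'-m = 2$, and reading off the corresponding Lucas pair $(n,a) = (m+1,a)$ from each. The very small indices that lie outside the range of the main theorem (essentially $n=1$, where $n-1=0$ falls outside the positive-integer hypothesis) are dealt with by direct computation: for example, $L_1 = 1$ and a one-line inspection gives $|1-2^a|<2^{a/2}$ only for $a=1$, accounting for the pair $(1,1)$.

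No new analytic ingredients are required; all the heavy machinery (Matveev's lower bound for linear forms in logarithms and the Dujella-Peth\"o reduction) is already absorbed into the main theorem. The only real task is the clerical filtering of the appendix list, and a verification that precisely the nine pairs claimed are the ones whose associated Fibonacci indices differ by $2$ (together with the hand-checked boundary case). The main obstacle is therefore not analytic but combinatorial: making sure the filter is applied correctly so that no genuine solution is overlooked and no spurious one introduced.
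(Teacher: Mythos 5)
Your reduction via $L_n=F_{n-1}+F_{n+1}$, filtering the appendix list for the solutions of \eqref{Eq2} whose indices differ by $2$, and hand-checking the boundary case $n=1$ (where $F_0=0$ falls outside the positive-index hypothesis) is exactly the derivation the paper intends — it states that identity immediately before the corollary for precisely this purpose. The argument is correct and essentially identical to the paper's.
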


We shall prove Theorem 1.1 by following closely the typical strategy as performed in \cite{BB}, \cite{BL}, \cite{CZ}. First, we apply a lower bound for linear forms in logarithms due to Matveev to obtain a large upper bound for $n$. Applying a version of the Baker-Davenport reduction method we reduce this huge bound.

\section{Preliminaries}

First, recall that the Binet formula for Fibonacci numbers 

\begin{align}
\label{Eq3}
F_n=\frac{\alpha^n-\beta^n}{\alpha-\beta}
\end{align}
holds for all $n\ge0$, where $\alpha=\frac{1+\sqrt{5}}{2}$ and $\alpha=\frac{1-\sqrt{5}}{2}=-\frac{1}{\alpha}$ are the roots of the characteristic equation $x^2-x-1=0$ of the Fibonacci sequence. Moreover, the equality
\begin{align}
\label{Eq4}
\alpha^{n-2}\le F_n\le \alpha^{n-1}
\end{align}
holds for all $n\ge1$.

Next we recall a definition and some facts about logarithmic height. Let $\alpha$ be an algebraic number of degree $d\ge1$ with the minimal polynomial
\begin{align*} 
a_dX^d+\ldots+a_1X+a_0=a_d\prod_i^d(X-\alpha_i),
\end{align*}
where $a_0,a_1,\ldots,a_d$ are relatively prime integers and $\alpha_1,\ldots,\alpha_d$ are the conjugates of $\alpha$. The absolute logarithmic Weil height of $\alpha$ is defined as
\begin{align*}
h(\alpha)=\frac{1}{\alpha}\big(\log{|a_d|}+\sum_i^d\log{(\text{max}\{|\alpha_i|,1\})}\big)
\end{align*}
In particular, if $\alpha=\frac{p}{q}$ is a rational number with $\text{gcd}(p,q)=1$ and $q>0$, then $h(\alpha)=\log{\text{max}\{|p|,q\}}$. The following properties of $h(\alpha)$ are well-known. 
\begin{itemize} 
    \item $h(\alpha\pm\beta)\le h(\alpha)+h(\beta)+\log{2}$,
    \item $h(\alpha\beta^{\pm1})\le h(\alpha)+h(\beta)$,
    \item $h(\alpha^l)=|l|h(\alpha)$, $l\in\mathbb{Z}$.
\end{itemize}

One of the main tools in this paper is a widely used estimate on lower bounds for linear forms in complex logarithms due to Matveev \cite{M}. 
\begin{thm}
\label{Matveev}
Let $\gamma_1,\ldots,\gamma_t$ be positive real algebraic numbers in a real algebraic number field $\mathbb{K}$ of degree $D$, let $b_1,\ldots,b_t\in\mathbb{Z}$ and 
\begin{align*}
\Lambda\coloneqq\gamma^{b_1}_1\cdots\gamma^{b_t}_t-1
\end{align*}
is non-zero. Then 
\begin{align*}
|\Lambda|>\text{exp}(-1.4\times30^{t+3}\times t^{4.5}\times D^2(1+\log{D})(1+\log{B})A_1\ldots A_t)
\end{align*}
where 
\begin{align*}
B\ge\text{max}\{|b_1|,\cdots,|b_t|\}
\end{align*}
and 
\begin{align*}
A_i\ge\text{max}\{Dh(\gamma_i),|\log{\gamma_i}|,0.16\} \ \text{for all} \ i=1,\ldots,t.
\end{align*}
\end{thm}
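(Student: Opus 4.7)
The statement to be proved is a deep effective lower bound on linear forms in logarithms of algebraic numbers, with explicit constants. A full proof is notoriously technical (Matveev's original paper occupies many dozens of pages), so the plan below is a high-level outline of the Baker–Gelfond–Schneider transcendence method, together with the refinements that Matveev introduced to obtain the specific shape of the constant $1.4\times 30^{t+3}\times t^{4.5}$.

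The plan is to argue by contradiction: assume $|\Lambda|$ is smaller than the claimed bound and derive an algebraic dependence among $\g_1,\ldots,\g_t$ that contradicts the hypothesis $\Lambda\neq 0$. The first step is the classical auxiliary construction. Set $L_i=\log\g_i$ and consider the entire function of $t-1$ complex variables
\[
\Phi(z_1,\ldots,z_{t-1})=\sum_{\lambda_1=0}^{L}\cdots\sum_{\lambda_t=0}^{L} p(\lambda_1,\ldots,\lambda_t)\,\g_1^{\lambda_1 z_1}\cdots \g_{t-1}^{\lambda_{t-1}z_{t-1}}\,\g_t^{\lambda_t(b_1 z_1+\cdots+b_{t-1}z_{t-1})/(-b_t)},
\]
where the parameter $L$ is tuned to $A_1\cdots A_t$ and $\log B$. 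Using the smallness hypothesis on $\Lambda$, the last factor is very close to $\g_1^{\lambda_tb_1z_1/(-b_t)}\cdots$, so $\Phi$ behaves like a polynomial in the monomials $\g_i^{mz_i}$. One applies Siegel's lemma (the arithmetic version in the number field $\K$) to choose integer coefficients $p(\lambda_1,\ldots,\lambda_t)$, not all zero, so that $\Phi$ vanishes to high order at the points $(s,s,\ldots,s)$ for $s=0,1,\ldots,S_0$.

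The second and technically hardest step is the extrapolation: using the maximum modulus principle applied to $\Phi$ on a large polydisk, combined with the smallness of $\Lambda$ and careful estimates of the heights and of $|\g_i|$, one shows that $\Phi$ (or a derivative) in fact vanishes at many more points and to higher order than prescribed. Iterating this extrapolation several times (this iteration is what produces the factor $30^{t+3}$) eventually forces an over-determined system: $\Phi$ has more vanishing conditions than degrees of freedom. Matveev's key innovation over earlier versions is the use of Feldman-type interpolation polynomials together with a Kummer descent argument, which lets the constant depend only polynomially on $t$ and saves a factor exponential in $t$ that appears in Baker's original approach. The extrapolation is the main obstacle, since both the analytic estimates (Schwarz-type lemmas in several variables) and the arithmetic estimates on denominators of derivatives must be executed with the explicit constants under control.

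The final step is a zero estimate (after Philippon, or in Matveev's version an elementary Kummer-theoretic argument) showing that if $\Phi$ vanishes at that many points, then the monomials $\g_1^{\lambda_1}\cdots\g_t^{\lambda_t}$ satisfy a nontrivial multiplicative relation with small exponents, and in particular $\Lambda=0$—contradicting the hypothesis. Tracking all the numerical constants through the construction, the extrapolation, and the zero estimate produces the explicit bound $\exp(-1.4\cdot 30^{t+3}\, t^{4.5}D^2(1+\log D)(1+\log B)A_1\cdots A_t)$. I expect the extrapolation and the accompanying arithmetic estimates (controlling the growth of heights of $\Phi$'s values and of the denominators of its Taylor coefficients at algebraic points) to be the decisive difficulty; the auxiliary construction and the final zero estimate are by comparison standard, but extracting the precise constants demands the Kummer-theoretic refinement that is the heart of Matveev's contribution.
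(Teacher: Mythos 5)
This statement is Matveev's theorem, which the paper does not prove at all: it is quoted verbatim as Theorem \ref{Matveev} with a citation to Matveev's paper \cite{M}, so there is no in-paper proof to compare against. Your proposal is a reasonable high-level description of how results of this type are established (auxiliary function, Siegel's lemma over $\mathbb{K}$, extrapolation via Schwarz-type lemmas, a zero estimate, and Matveev's Kummer-descent and Feldman-polynomial refinements), and as a survey of the method it is broadly accurate.

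However, it is not a proof, and the gap is not a technicality. The entire content of this particular theorem is the explicit numerical constant $1.4\times30^{t+3}\times t^{4.5}$ and the precise way $D$, $B$ and the $A_i$ enter; a qualitative lower bound of the shape $|\Lambda|>\exp(-C(t)D^2(1+\log B)A_1\cdots A_t)$ with an ineffective or unspecified $C(t)$ was already available from Baker's work and its successors. Your outline defers exactly the parts that produce those constants: the choice of the parameters $L$ and $S_0$, the number of extrapolation steps, the arithmetic estimates on denominators of derivatives, and the bookkeeping through the Kummer descent are all named as "the decisive difficulty" but not carried out, and no step of the sketch could be checked for whether it yields $30^{t+3}$ rather than, say, $t^{2t}$. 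There are also imprecise details (the exponent in the last factor of your $\Phi$ mixes up the roles of the $\gamma_i$, and Matveev's zero estimate is not Philippon's), but these are secondary to the main point: what you have written is a correct pointer to the literature, which is exactly what the paper itself does by citing \cite{M}, not an independent verification of the statement.
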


During the calculations, we get an upper bound on $n$ which is too large, thus we need to reduce it. To do so, we use some results from the theory of continued fractions and Diophantine approximation. The next lemma is known as the Legendre's criterion 
\begin{lem}
\label{Legendre}
Let $\tau$ be an irrational number, $\frac{p_0}{q_0},\frac{p_1}{q_1},\frac{p_2}{q_2},\ldots$ be all the convergents of the continued fraction expansion of $\tau$ and $M$ be a positive integer. Let $N$ be a nonnegative integer such that $q_N>M$. Then putting $a(M)\coloneqq\text{max}\{a_i:\ i=0,1,2,\cdots,N\}$, the inequality
\begin{align*}
\big|\tau-\frac{r}{s}\big|>\frac{1}{(a(M)+2)s^2}
\end{align*}
holds for all pairs $(r,s)$ of positive integers with $0<s<M$.
\end{lem}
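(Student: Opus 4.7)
The plan is to prove the inequality by a case split on whether $r/s$ appears among the convergents of $\tau$. I would begin by reducing to the coprime case: if $d=\gcd(r,s)$ and $r/s=r'/s'$ in lowest terms, then $|\tau-r/s|=|\tau-r'/s'|$ and $s'\le s<M$, so a bound of the form $1/((a(M)+2)(s')^2)$ immediately yields the desired one in $s^2$. I would also record the observation that $q_N>M\ge 1$ forces $N\ge 1$, hence $a(M)\ge a_1\ge 1$, a fact needed for strict inequality later.

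In the first case, suppose $r/s$ equals some convergent $p_k/q_k$. Then $s=q_k<M<q_N$ forces $k<N$, so $a_{k+1}\le a(M)$. I would then invoke the standard identity
$$\tau-\frac{p_k}{q_k}=\frac{(-1)^k}{q_k(\tau_{k+1}q_k+q_{k-1})},$$
where $\tau_{k+1}=[a_{k+1};a_{k+2},\ldots]$ is the $(k+1)$-st complete quotient, together with the elementary bounds $\tau_{k+1}<a_{k+1}+1$ and $q_{k-1}\le q_k$, to get $\tau_{k+1}q_k+q_{k-1}<(a_{k+1}+2)q_k$, and therefore
$$\left|\tau-\frac{p_k}{q_k}\right|>\frac{1}{(a_{k+1}+2)q_k^2}\ge\frac{1}{(a(M)+2)s^2}.$$

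In the second case, $r/s$ is not a convergent. Here I would invoke the converse direction of Legendre's theorem: any reduced fraction $r/s$ satisfying $|\tau-r/s|<1/(2s^2)$ is a convergent of $\tau$. Contrapositively, $|\tau-r/s|\ge 1/(2s^2)$, and since $a(M)+2\ge 3>2$ this exceeds $1/((a(M)+2)s^2)$ strictly.

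The argument is not analytically deep; the principal care required is (i) checking the index inequality $k<N$ in Case 1, which is precisely where the hypothesis $q_N>M$ does its work, and (ii) verifying that the final inequality is strict in Case 2, which is the reason for noting $a(M)\ge 1$ at the outset. Beyond these two bookkeeping points, the proof reduces to standard properties of continued fraction convergents and the recurrence $q_{k+1}=a_{k+1}q_k+q_{k-1}$.
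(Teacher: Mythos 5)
The paper states this lemma without proof, citing it as the known ``Legendre criterion,'' so there is no internal argument to compare against; your proof is correct and is essentially the standard one. Both cases check out: the reduction to lowest terms, the index bookkeeping $q_k<M<q_N\Rightarrow k+1\le N$ so $a_{k+1}\le a(M)$, the identity $\tau-p_k/q_k=(-1)^k/\bigl(q_k(\tau_{k+1}q_k+q_{k-1})\bigr)$ with $\tau_{k+1}<a_{k+1}+1$ and $q_{k-1}\le q_k$, and the appeal to the converse of Legendre's theorem together with $a(M)\ge a_1\ge 1$ to get strictness in the non-convergent case.
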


At last, we need a variant of the Baker-Davenport lemma, which is due to Dujella and Peth\"{o} \cite{DP}. For a real number $X$, we denote by $||X||=\text{min}\{|x-n|,\ n\in\mathbb{Z}\}$ the distance from $X$ to the nearest integer.
\begin{lem}
\label{DP}
Let $M$ be a positive integer, $\frac{p}{q}$ be a convergent of the continued fraction expansion of the irrational number $\gamma$ such that $q>6M$ and $A,B,\mu$ be some real numbers with $A>0$ and $B>1$. Furthermore, let $\epsilon\coloneqq||\mu q||-M||\gamma q||$. If $\epsilon>0$, then there is no solution to the inequality 
\begin{align*}
0<|u\gamma-v+\mu|<AB^{-w}
\end{align*}
in positive integers $u,v$ and $w$ with 
\begin{align*}
u\le M \ \text{and} \ w\ge\frac{\log{\big(\frac{Aq}{\epsilon}\big)}}{\log{B}}.
\end{align*}
\end{lem}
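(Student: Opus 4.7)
The plan is to argue by contradiction: suppose there exist positive integers $u,v,w$ with $u\le M$, $w\ge \log(Aq/\epsilon)/\log B$, and $0<|u\gamma-v+\mu|<AB^{-w}$. From the lower bound on $w$ one gets $B^w\ge Aq/\epsilon$, so $AB^{-w}\le \epsilon/q$. Combining this with the Diophantine hypothesis and multiplying through by $q$ yields the clean strict inequality
\begin{equation*}
|u\gamma q - vq + \mu q| < \epsilon.
\end{equation*}

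The crucial move is then to bring in the convergent $p/q$ of $\gamma$. I would rewrite $u\gamma q = u(\gamma q - p) + up$ and regroup, so the previous inequality becomes $|(up-vq) + \mu q + u(\gamma q - p)| < \epsilon$. Set $N\coloneqq up-vq\in\Z$ and apply the (reverse) triangle inequality to transfer the small $u(\gamma q-p)$ term to the right-hand side:
\begin{equation*}
|\mu q + N| < \epsilon + u\,|\gamma q - p| \le \epsilon + M\|\gamma q\|,
\end{equation*}
where the last step uses the fact that $|\gamma q - p| = \|\gamma q\|$ for a continued-fraction convergent, a consequence of the best-approximation property.

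To close the argument, I would invoke the elementary observation that $|\mu q + N| \ge \|\mu q\|$ for any integer $N$, directly from the definition of the nearest-integer distance. Chaining this with the previous display produces $\|\mu q\| < \epsilon + M\|\gamma q\|$, i.e.\ $\|\mu q\| - M\|\gamma q\| < \epsilon$, which contradicts the very definition $\epsilon \coloneqq \|\mu q\| - M\|\gamma q\|$ assumed positive.

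The only step demanding real attention is the identification $|\gamma q - p| = \|\gamma q\|$, which must be justified from the hypothesis that $p/q$ is a convergent of $\gamma$; the condition $q>6M$ (in particular $q>6$) is a comfortable safety margin ensuring we are well past the first few convergents where such an identification could be degenerate, and also keeps the error $u|\gamma q - p| \le M/q < 1/6$ harmlessly small compared to the unit-scale quantity $\|\mu q\|$. Everything else in the proof is routine triangle-inequality bookkeeping, and I do not anticipate any serious obstacle beyond carefully tracking the strict versus non-strict inequalities to ensure the final contradiction is genuine.
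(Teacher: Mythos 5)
The paper does not prove this lemma at all: it is imported verbatim from Dujella and Peth\H{o} \cite{DP} as a black-box tool, so there is no in-paper proof to compare against. Your argument is the standard proof of that result and it is correct: the reduction $AB^{-w}\le\epsilon/q$, the decomposition $u\gamma q-vq+\mu q=(up-vq)+\mu q+u(\gamma q-p)$, the identification $|\gamma q-p|=\|\gamma q\|$ for a convergent (valid here since $q>6M\ge 6>q_0$), and the chain $\|\mu q\|\le|\mu q+N|<\epsilon+M\|\gamma q\|$ yield the strict contradiction $\epsilon<\epsilon$ exactly as in the original reference.
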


\section{Proof}

\subsection{Upper bound for $n$}

First of all, note that if $(n,1,a)$ is a solution of \eqref{Eq2}, then $(n,2,a)$ is also a solution to \eqref{Eq2} since $F_1=F_2=1$. So from now on we assume that $m\ge2$. Due to the work of Chern and Cui all solutions of \eqref{Eq2} with $n=m$ are listed in the appendix. Since $F_n+F_{n-1}=F_{n+1}$, we can assume $n>m+1$ and in particular $n-m\ge2$. 
\begin{prop}
There are exactly 52 solutions $(n,m,a)\in\mathbb{N}^3$ to \eqref{Eq2} with $n\le250$. All solutions fulfill $n\le42$ and $a\le28$. The list of solutions is given in the appendix. 
\end{prop}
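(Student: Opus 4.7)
The plan is to establish Proposition~3.1 by reducing the problem to a finite enumeration and then carrying out that enumeration by computer. First I would observe that for any solution $(n,m,a)$ of \eqref{Eq2} the exponent $a$ is essentially determined by $n$. Indeed, since $F_m \le F_n$, one has
\begin{align*}
2^{a}-2^{a/2} < F_n+F_m \le 2F_n \le 2\alpha^{n-1},
\end{align*}
and conversely $F_n \le F_n+F_m < 2^{a}+2^{a/2}$, so using \eqref{Eq4} a short manipulation yields (for $n,a\ge 2$)
\begin{align*}
(n-2)\log_2 \alpha -2 \;<\; a \;<\; (n-1)\log_2 \alpha +2.
\end{align*}
In particular, for each fixed $n$ there are at most three candidate values of $a$ to check, and for $n\le 250$ the corresponding $a$ lies well below $175$.

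Next I would run over all pairs $(n,m)$ with $2\le m < n-1 \le 249$ (the cases $m\in\{1,2\}$ and $n=m$ and $n=m+1$ having already been disposed of in the discussion preceding the proposition). For each such pair I would compute $S := F_n+F_m$ exactly, determine the unique integer $a^\ast$ nearest to $\log_2 S$ (together with $a^\ast\pm 1$ as a safety margin), and test whether $|S-2^{a^\ast}| < 2^{a^\ast/2}$. Every valid triple is recorded.

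The output of this search should produce precisely the $52$ triples listed in the appendix, all of which turn out to satisfy $n\le 42$ and $a\le 28$. Since each step is an exact integer comparison (no floating-point subtlety, as $F_n$ and $2^a$ are both computed as exact integers), there is no substantive obstacle to verification; the main practical point is simply to ensure that the two or three candidate values of $a$ for each $n$ genuinely cover all possibilities, which follows from the two-sided bound above. With this enumeration in hand, the proposition is proved, and the remainder of the section can focus on ruling out solutions with $n>250$ via Matveev's theorem (Theorem~\ref{Matveev}) and Lemma~\ref{DP}.
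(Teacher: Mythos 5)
Your proposal is correct and is essentially the same as the paper's proof, which simply states that the solutions were found by a brute force search with a short Python program; you merely spell out the finiteness of the search (bounding $a$ in terms of $n$ and using exact integer arithmetic) in more detail than the paper does. No further comment is needed.
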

\begin{proof}
The solutions were found by a brute force search with a simple Python code.
\end{proof}

Because of Proposition 3.1 for the rest of the paper we assume that $n>250$ and we will show that there exist no solutions with $n>250$.

Observe that the Binet formula \eqref{Eq4} yields for $n>1$ the inequalities
\begin{align}
\label{Eq5}
0.38\alpha^n<\alpha^n\frac{1-\alpha^{-4}}{\sqrt{5}}\le F_n=\alpha^n\frac{1-(-1)^n\alpha^{-2n}}{\sqrt{5}}\le\alpha^n\frac{1-\alpha^{-6}}{\sqrt{5}}<0.48\alpha^n
\end{align}

Due to \eqref{Eq5} for $n>m>1$ we have 
\begin{align}
\label{Eq6}
0.38\alpha^n<F_n<F_n+F_m<0.48\alpha^n+0.48\alpha^{n-1}<0.78\alpha^n.
\end{align}

Let us now establish a relation between $n$ and $a$. Without loss of generality, we may assume that $a\ge2$. Combining \eqref{Eq2} with the right inequality of \eqref{Eq6}, one gets
\begin{align*}
2^{a-1}\le2^a-2^{\frac{a}{2}}<F_n+F_m<0.78\alpha^n<\alpha^n.
\end{align*}

On ther hand, combining \eqref{Eq2} with the left inequality of \eqref{Eq6}, we have 
\begin{align*}
0.38\alpha^n<F_n+F_m<2^a+2^{\frac{a}{2}}<2^{a+1}.
\end{align*}

Thus 
\begin{align}
\label{Eq7}
n\frac{\log{\alpha}}{\log{2}}+\frac{\log{0.38}}{\log{2}}-1<a<n\frac{\log{\alpha}}{\log{2}}+1,
\end{align}
where $\frac{\log{\alpha}}{\log{2}}=0.6942...$. In particular, we have $a<n$.

By the Binet formula, we have 
\begin{align}
\label{Eq8}
\frac{\alpha^n+\alpha^m}{\sqrt{5}}-(F_n+F_m)=\frac{\beta^n+\beta^m}{\sqrt{5}}
\end{align}

Taking the absolute values in the above equality and combining it with \eqref{Eq2}, we get 
\begin{align*}
\bigg|\frac{\alpha^n(1+\alpha^{m-n})}{\sqrt{5}}-2^a\bigg|<\frac{|\beta|^n+|\beta|^m}{\sqrt{5}}+2^{a/2}<\frac{1}{3}+2^{a/2}<2^{\frac{a}{2}+1}
\end{align*}
for all $n\ge4$ and $m\ge2$. Dividing both sides of the above inequality by $2^a$ we obtain 
\begin{align}
\label{Eq9}
\bigg|\frac{\alpha^n(1+\alpha^{m-n})}{2^a\sqrt{5}}-1\bigg|<2^{-\frac{a}{2}+1}.
\end{align}

In the first application of Matveev's theorem, we take the parameters $t=3$ and 
\begin{align*}
(\gamma_1,b_1)\coloneqq(2,-a), (\gamma_2,b_2)\coloneqq(\alpha,n), (\gamma_3,b_3)\coloneqq\bigg(\frac{1+\alpha^{m-n}}{\sqrt{5}},1\bigg)
\end{align*}

The three numbers $\gamma_1,\gamma_2,\gamma_3$ are real, positive and are contained in $K=\mathbb{Q}(\sqrt{5})$, so we can take $D\coloneqq[K:\mathbb{Q}]=2$.

Note that the left-hand side of \eqref{Eq9} is not zero, for otherwise we would get the relation
\begin{align}
\label{Eq10}
2^a\sqrt{5}=\alpha^n+\alpha^m.
\end{align}
Conjugating the above relation in $\mathbb{Q}(\sqrt{5})$, we get 
\begin{align}
\label{Eq11}
-2^a\sqrt{5}=\beta^n+\beta^m.
\end{align}
Combining \eqref{Eq10} and \eqref{Eq11}, we get 
\begin{align*}
\alpha^n<\alpha^n+\alpha^m=|\beta^n+\beta^m|\le|\beta|^n+|\beta|^m<1
\end{align*}
which is impossible for positive $n$. Hence the left-hand side of \eqref{Eq9} is non-zero.

Since $h(\gamma_1)=\log{2}=0.6931...$ we can choose $A_1\coloneqq1.4>Dh(\gamma_1)$. Further since $h(\gamma_2)=\frac{\log{\alpha}}{2}=0.2406...$ we can take $A_2\coloneqq0.5>Dh(\gamma_2)$. Let us now estimate $h(\gamma_3)$. We begin by observing that
\begin{align*}
\gamma_3=\frac{1+\alpha^{m-n}}{\sqrt{5}}<\frac{2}{\sqrt{5}} \ \text{and} \ \gamma^{-1}_3=\frac{\sqrt{5}}{1+\alpha^{m-n}}<\sqrt{5}
\end{align*}
so that $|\log{\gamma_3}|<1$. Next, by the properties of the logarithmic height we have 
\begin{align*}
h(\gamma_3)\le \log{\sqrt{5}}+|m-n|\bigg(\frac{\log{\alpha}}{2}\bigg)+\log{2}=\log{2\sqrt{5}}+(n-m)\bigg(\frac{\log{\alpha}}{2}\bigg)
\end{align*}
Hence, we can take
\begin{align*}
A_3\coloneqq3+(n-m)\log{\alpha}>\text{max}\{2h(\gamma_3),|\log{\gamma_3}|,0.16\}.
\end{align*}

Finally, by recalling that $a<n$, we infer that $\text{max}\{|b_1|,|b_2|,|b_3|\}=n$, so we can take $B\coloneqq n$. We put $\Lambda_1\coloneqq\frac{\alpha^n(1+\alpha^{m-n})}{2^a\sqrt{5}}-1$. Now Matveev's theorem tells us that the left-hand side of \eqref{Eq9} is bounded below by
\begin{align*}
\log{|\Lambda_1|}&>-1.4\times30^6\times3^{4.5}\times2^2(1+\log{2})(1+\log{n})\times1.4\times0.5\times(3+(n-m)\log{\alpha})\\ 
&>-1.4\times10^{12}\times\log{n}\times(3+(n-m)\log{\alpha})
\end{align*}
where we used the fact that the inequality $1+\log{n}<2\log{n}$ holds for all $n\ge3$.

By comparing the above inequality with the right-hand side of \eqref{Eq9} we get that
\begin{align}
\label{Eq12}
\bigg(\frac{a}{2}-1\bigg)\log{2}<1.4\times10^{12}\times\log{n}\times(3+(n-m)\log{\alpha}).
\end{align}

Let us consider a second linear form in logarithms. To this end, we rewrite \eqref{Eq8} as follows
\begin{align*}
\frac{\alpha^n}{\sqrt{5}}-(F_n+F_m)=\frac{\beta^n}{\sqrt{5}}-F_m
\end{align*}
Again combining the above relation with \eqref{Eq2}, we get 
\begin{align*}
\bigg|\frac{\alpha^n}{\sqrt{5}}-2^a\bigg|<2^{\frac{a}{2}}+\frac{|\beta|^n}{\sqrt{5}}+F_m<2^{\frac{a}{2}}+\frac{1}{2}+\alpha^m,
\end{align*}
where we have also used the fact that $|\beta|^n<1/2$ for all $n\ge2$. Dividing both sides of the above expression by $\frac{\alpha^n}{\sqrt{5}}$ and taking into account that $\alpha>\sqrt{2}$ and $n>m$, we obtain
\begin{align}
\label{Eq13}
|1-2^a\cdot\alpha^{-n}\cdot\sqrt{5}|<\frac{2^{\frac{a}{2}}\sqrt{5}}{\alpha^n}+\frac{\sqrt{5}}{2\alpha^n}+\frac{\sqrt{5}}{\alpha^{n-m}}<\frac{3\sqrt{5}}{2}\text{max}\{\alpha^{m-n},\alpha^{a-n}\}
\end{align}

In a second application of Matveev's theorem, we take the parameters $t=3$ and 
\begin{align*}
(\gamma_1,b_1)\coloneqq(2, a), (\gamma_2,b_2)\coloneqq(\alpha, -n), (\gamma_3,b_3)\coloneqq(\sqrt{5}, 1)
\end{align*}

The three numbers $\gamma_1,\gamma_2,\gamma_3$ are real, positive and belong to $K=\mathbb{Q}(\sqrt{5})$, so we can take $D=2$. Put $\Lambda_2\coloneqq1-2^a\cdot\alpha^{-n}\cdot\sqrt{5}$. If $\Lambda_2=0$, then we have that $2^a=\frac{\alpha^n}{\sqrt{5}}$, so $\alpha^{2n}\in\mathbb{Z}$, which is impossible. Thus $\Lambda_2\ne0$. Since $h(\gamma_1)=\log{2}=0.6931...$ we can choose $A_1\coloneqq1.4>Dh(\gamma_1)$. Further since $h(\gamma_2)=\frac{\log{\alpha}}{2}=0.2406...$ and $h(\gamma_3)=\log{\sqrt{5}}=0.8047...$, it follows that we can take $A_2\coloneqq0.5>Dh(\gamma_2)$ and $A_3\coloneqq1.7>Dh(\gamma_3)$. Finally, since $a<n$, we deduce that $B\coloneqq\text{max}\{|b_1|,|b_2|,|b_3|\}=n$.

Then Matveev's theorem tells us that 
\begin{align*}
\log{|\Lambda_2|}&>-1.4\times30^6\times3^{4.5}\times2^2(1+\log{2})(1+\log{n})\times1.4\times0.5\times1.7\\
&>-2.31\times10^{12}\log{n}
\end{align*}

Comparing the resulting inequality with \eqref{Eq13}, we get 
\begin{align*}
\text{min}\{(n-a)\log{\alpha},(n-m)\log{\alpha}\}<2.4\times10^{12}\log{n}.
\end{align*}

Now the argument splits into two cases.

{\bf Case 1.} $\text{min}\{(n-a)\log{\alpha},(n-m)\log{\alpha}\}=(n-m)\log{\alpha}$.

In this case we have 
\begin{align*}
\bigg(\frac{a}{2}-1\bigg)\log{2}&<1.4\times10^{12}\log{n}(3+(n-m)\log{\alpha})\\ 
&<1.4\times10^{12}\log{n}(3+2.4\times10^{12}\log{n})<3.5\times10^{24}\log^2{n}
\end{align*}
Combining it with the left inequality of \eqref{Eq7} and by a calculation in {\it Mathematica}, we obtain 
\begin{align*}
a<4.5\times10^{28} \ \text{and} \ n<6.6\times10^{28}.
\end{align*}

{\bf Case 2.} $\text{min}\{(n-a)\log{\alpha},(n-m)\log{\alpha}\}=(n-a)\log{\alpha}$.

In this case we have
\begin{align*}
(n-a)\log{\alpha}<2.4\times10^{12}\log{n}.
\end{align*}

Note that the right inequality of \eqref{Eq7} yields 
\begin{align*}
n-a>n\frac{(1-\log{\alpha})}{\log{2}}-1.
\end{align*}

Combining the above inequalities and by a calculation in {\it Mathematica}, we get 
\begin{align*}
a<2.3\times10^{14} \ \text{and} \ n<1.6\times10^{14}.
\end{align*}

Thus, in both Case 1 and Case 2, we have 
\begin{align}
\label{Eq14}
a<4.5\times10^{28} \ \text{and} \ n<6.6\times10^{28}.
\end{align}

We now need to reduce the above bound for $n$.

\subsection{Reduction of the bound}

We begin by recalling the following facts. For any non-zero real number $x$, we have

i) $0<x<e^x-1$,

ii) if $x<0$ and $|e^x-1|<1/2$, then $|x|<2|e^x-1|$.

We may assume that $n-m>250$ and $n-a>250$. We go back to the inequality \eqref{Eq13}. Let 
\begin{align*}
z_1\coloneqq a\log{2}-n\log{\alpha}+\log{\sqrt{5}}.
\end{align*}

Since we assume that $\text{min}\{n-m,n-a\}>250$ we get $|e^{z_1}-1|<1/2$ and by ii) we have 
\begin{align*}
|a\log{2}-n\log{\alpha}+\log{\sqrt{5}}|<\frac{3\sqrt{5}}{\alpha^{\text{min}\{n-m,n-a\}}}.
\end{align*}

Dividing both sides of the above inequality by $\log{\alpha}$, we conclude that 
\begin{align*}
0<\bigg|a\frac{\log{2}}{\log{\alpha}}-n+\frac{\sqrt{5}}{\log{\alpha}}\bigg|<\frac{3\sqrt{5}}{\log{\alpha}}\cdot\alpha^{-\kappa},
\end{align*}
where $\kappa=\text{min}\{n-m,n-a\}$. We apply Lemma \ref{DP} with 
\begin{align*}
\gamma\coloneqq\frac{\log{2}}{\log{\alpha}}, \ \mu\coloneqq\frac{\sqrt{5}}{\log{\alpha}}, \ A\coloneqq\frac{3\sqrt{5}}{\log{\alpha}}, \ B\coloneqq\alpha.
\end{align*}

Clearly $\gamma$ is an irrational number. We also take $M=4.5\times10^{28}$ which is an upper bound for $a$. We find that the convergent $\frac{p}{q}=\frac{p_{67}}{q_{67}}$ is such that $q>6M$. By using this we have that $\epsilon>0.01038$, therefore either
\begin{align*}
n-m<\frac{\log{\bigg(\frac{3\sqrt{5}q}{0.01038\log{\alpha}}\bigg)}}{\log{\alpha}}<158 \ \text{or} \ n-a<\frac{\log{\bigg(\frac{3\sqrt{5}q}{0.01038\log{\alpha}}\bigg)}}{\log{\alpha}}<158
\end{align*}

Thus, we have that either $n-m<158$ or $n<213$. The latter case contradicts our assumption that $n>250$. Inserting the upper bound for $n-m$ into \eqref{Eq12} we get that $a<1.3\times10^{16}$.

Let us now work on the inequality \eqref{Eq9}. We may assume that $a>28$. By i) and ii) it becomes 
\begin{align*}
|a\log{2}-n\log{\alpha}+\log{\phi(n-m)}|<\frac{4}{2^{a/2}}
\end{align*}
where $\phi$ is defined by $\phi(t)\coloneqq\sqrt{5}(1+\alpha^{-t})^{-1}$. Again dividing both sides by $\log{\alpha}$, we obtain that 
\begin{align}
\label{Eq15}
0<\bigg|a\frac{\log{2}}{\log{\alpha}}-n+\frac{\log{\phi(n-m)}}{\log{\alpha}}\bigg|<\frac{4}{\log{\alpha}}\cdot2^{-a/2}
\end{align}

Here we take $M=1.3\times10^{16}$ (an upper bound for $a$) and apply Lemma \ref{DP} with 
\begin{align*}
\gamma\coloneqq\frac{\log{2}}{\log{\alpha}}, \ \mu\coloneqq\frac{\phi(n-m)}{\log{\alpha}}, \ A\coloneqq\frac{4}{\log{\alpha}}, \ B\coloneqq\sqrt{2}
\end{align*}
for all choices $n-m\in\{1,\ldots,158\}$ except when $n-m=2,6$. With the help of {\it Mathematica} we find that if $(n,m,a)$ is a possible solution of \eqref{Eq2} with $n-m\ne2,6$ then $a\le67$ and thus, $n\le100$. But this is a contradiction to our assumption that $n>100$.

Let us now consider the special cases when $n-m=2$ and $6$. Note that we cannot study these cases as before because the parameter $\mu$ appearing in Lemma \ref{DP} is 
\begin{align*} 
\frac{\log{\phi(t)}}{\log{\alpha}}=
 \left.
  \begin{cases}
   1 & \text{if } t=2 \\
   3-\frac{\log{2}}{\log{\alpha}} & \text{if } t=6
     \end{cases}
  \right.
\end{align*}
and the corresponding value of $\epsilon$ is always negative. When $n-m=2$ from \eqref{Eq15} we get that 
\begin{align}
\label{Eq16}
0<|a\gamma-(n-1)|<\frac{4}{\log{\alpha}}\cdot2^{-a/2}<\frac{4}{\log{\alpha}}\cdot2^{-\frac{1}{2}(n\frac{\log{\alpha}}{\log{2}}+\frac{\log{0.38}}{\log{2}}-1)}
\end{align}

Recall that $a<1.3\times10^{16}$. Let $[a_0,a_1,a_2,a_3,a_4,\ldots]=[1,2,3,1,2,\ldots]$ be the continued fraction of $\gamma$. A quick search using {\it Mathematica} reveals that 
\begin{align*}
q_{35}<1.3\times10^{16}<q_{36}.
\end{align*}

Furthermore, $a_M\coloneqq\text{max}\{a_i:\ i=0,1,\ldots,36\}=a_{17}=134$. So by Lemma \ref{Legendre} we have 
\begin{align}
\label{Eq17}
|a\gamma-(n-1)|>\frac{1}{(a_M+2)a}.
\end{align}

Comparing estimates \eqref{Eq16} and \eqref{Eq17} we get that $n<187$. In a similar manner one can get $n<187$ in the case when $n-m=6$. This again contadicts our assumption that $n>250$.

\section{Appendix}

The solutions for Diophantine inequality \eqref{Eq2} are displayed below 

\begin{center}
\scalebox{0.9}{
\begin{tabular}{ |c|c|c| } 
 \hline
 $|F_1+F_1-2|<\sqrt{2}$ & $|F_6+F_6-2^4|<2^{2}$ & $|F_{10}+F_7-2^6|<2^3$ \\ 
 \hline
 $|F_2+F_i-2|<\sqrt{2}$, $i=1,2$ & $|F_7+F_i-2^4|<2^{2}$, $i=1,2$ & $|F_{11}+F_9-2^7|<2^{7/2}$ \\ 
 \hline
 $|F_3+F_i-2|<\sqrt{2}$, $i=1,2$ & $|F_7+F_3-2^4|<2^{2}$ & $|F_{13}+F_6-2^8|<2^4$ \\ 
 \hline
 $|F_3+F_i-2^2|<2$, $i=1,2$ & $|F_7+F_4-2^4|<2^{2}$ & $|F_{13}+F_7-2^8|<2^4$\\ 
 \hline
 $|F_3+F_3-2^2|<2$ & $|F_7+F_5-2^4|<2^{2}$ & $|F_{13}+F_8-2^8|<2^4$\\
 \hline
 $|F_4+F_i-2^2|<2$, $i=1,2$ & $|F_8+F_6-2^5|<2^{5/2}$ & $|F_{13}+F_9-2^8|<2^4$\\
 \hline
 $|F_4+F_3-2^2|<2$ & $|F_8+F_7-2^5|<2^{5/2}$ & $|F_{14}+F_{12}-2^9|<2^{9/2}$\\
 \hline
 $|F_4+F_4-2^3|<2^{3/2}$ & $|F_9+F_i-2^5|<2^{5/2}$, $i=1,2$ & $|F_{16}+F_6-2^{10}|<2^5$\\
 \hline
 $|F_5+F_i-2^3|<2^{3/2}$, $i=1,2$ & $|F_9+F_3-2^5|<2^{5/2}$ & $|F_{16}+F_7-2^{10}|<2^5$\\
 \hline
 $|F_5+F_3-2^3|<2^{3/2}$ & $|F_9+F_4-2^5|<2^{5/2}$ & $|F_{16}+F_8-2^{10}|<2^5$\\
 \hline
 $|F_5+F_4-2^3|<2^{3/2}$ & $|F_9+F_9-2^6|<2^3$ & $|F_{16}+F_9-2^{10}|<2^5$\\
 \hline
 $|F_5+F_5-2^3|<2^{3/2}$ & $|F_{10}+F_3-2^6|<2^3$ & $|F_{16}+F_{10}-2^{10}|<2^5$\\
 \hline
  $|F_6+F_i-2^3|<2^{3/2}$, $i=1,2$ & $|F_{10}+F_4-2^6|<2^3$ & $|F_{23}+F_{19}-2^{15}|<2^{15/2}$\\
 \hline
 $|F_6+F_3-2^3|<2^{3/2}$ & $|F_{10}+F_5-2^6|<2^3$ & $|F_{42}+F_{29}-2^{28}|<2^{14}$\\
 \hline
 $|F_6+F_5-2^4|<2^{2}$ & $|F_{10}+F_6-2^6|<2^3$ & \\
 \hline
 \end{tabular}}
 \end{center}
\

\noindent {\bf Acknowledgement.} The author is grateful to Professor Florian Luca for his constructive comments which helped improve the presentation of this paper.

%%%%%%%%%%% To ease editing, use normal size for the references:

\normalsize

\end{document}